\newtheorem{thm}{Theorem}[section]
\newtheorem*{thm*}{Theorem}
\newtheorem{lem}[thm]{Lemma}
\newtheorem{fact}[thm]{Fact}
\newtheorem{prop}[thm]{Proposition}
\newtheorem*{prop*}{Proposition}
\newtheorem{cor}[thm]{Corollary}
\newtheorem*{cor*}{Corollary}
\theoremstyle{definition}
\newtheorem{defn}[thm]{Definition}
\newtheorem*{defn*}{Definition}
\newtheorem*{question*}{Question}
\newtheorem*{conv*}{Convention}
\def\bb{\mathbb}
\def\bb{\mathbb}
\def\cal{\mathcal}
\def\dotminussym#1#2{%
  \setbox0=\hbox{$\m@th#1-$}%
  \kern.5\wd0%
  \hbox to 0pt{\hss\hbox{$\m@th#1-$}\hss}%
  \raise.6\ht0\hbox to 0pt{\hss$\m@th#1.$\hss}%
  \kern.5\wd0}
\def \sl{\operatorname{SL}}
\def \psl{\operatorname{PSL}}
\title{A non-uniformly inner amenable group}
\author{Isaac Goldbring}
\address{Department of Mathematics\\University of California, Irvine, 340 Rowland Hall (Bldg.\# 400),
Irvine, CA 92697-3875}
\email{isaac@math.uci.edu}
\urladdr{http://www.math.uci.edu/~isaac}
 \thanks{The author was partially supported by NSF grant DMS-2054477.}
\begin{document}

\begin{abstract}
  We provide an example of two elementarily equivalent countable ICC groups $G$ and $H$ such that $G$ is amenable and $H$ is not inner amenable.  As a result, we provide the first example of elementarily equivalent groups whose group von Neumann algebras are not elementarily equivalent, answering a question asked by many researchers.
\end{abstract}
\maketitle

\section{Introduction}

If $G$ is a group and $X$ is a set, an action $G\curvearrowright X$ of $G$ on $X$ is called \textbf{amenable} if there is a finitely additive $G$-invariant probability measure on $X$.  $G$ is called \textbf{amenable} if the action of $G$ on itself by left multiplication is amenable while $G$ is called \textbf{inner amenable} if the action of $G$ on $G\setminus \{e\}$ by conjugation is amenable.  Throughout this note, we assume basic facts about amenable groups; we recommend \cite{juschenko} as a good reference.  We note that amenable groups are inner amenable; see, for example, \cite[Theorem 2.20]{juschenko}.  

A group $G$ is called \textbf{ICC} if all nontrivial conjugacy classes are infinite. 

Two groups $G$ and $H$ are called \textbf{elementarily equivalent} if they satisfy the same first-order sentences in the language of groups.  Equivalently, by the Keisler-Shelah theorem, two groups are elementarily equivalent if and only if they have isomorphic ultrapowers.

The main result of this note is the following:

\begin{thm}\label{main}
There are elementarily equivalent countable ICC groups $G$ and $H$ such that $G$ is amenable and $H$ is not inner amenable.
\end{thm}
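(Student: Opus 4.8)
The plan is to isolate the first-order feature of $G$ that the argument exploits, reduce to constructing a group with that feature, and then build it from an expanding family of finite simple groups.

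\emph{Reduction.} Call a group $\Gamma$ \emph{uniformly inner amenable} if for all $k\ge 1$ and $\varepsilon>0$ there is $N$ such that every $k$-tuple $\bar g$ from $\Gamma$ admits a nonempty $A\subseteq\Gamma\setminus\{e\}$ with $|A|\le N$ and $|g_iAg_i^{-1}\triangle A|<\varepsilon|A|$ for all $i$. For fixed $k,N,\varepsilon$ this is a first-order sentence --- a bounded disjunction over $|A|\in\{1,\dots,N\}$ of existential statements about finitely many elements --- so uniform inner amenability is preserved under elementary equivalence, and, by the F\o lner characterization of amenable actions (a group is inner amenable iff its conjugation action on $\Gamma\setminus\{e\}$ satisfies the F\o lner condition), it implies inner amenability. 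Hence it suffices to produce an amenable ICC group $G$ that is \emph{not} uniformly inner amenable, witnessed by some $k$, some $\varepsilon_0>0$, and $k$-tuples $\bar g_m$ with no $\varepsilon_0$-approximately-conjugation-invariant subset of $G\setminus\{e\}$ of size $\le m$. Then, adding a tuple $\bar c$ of constants, every finite fragment of $\mathrm{Th}(G)\cup\{\text{``}\bar c\text{ has no such }A\text{ of size}\le m\text{''}:m\in\mathbb N\}$ holds in $G$ (take $\bar c=\bar g_m$, $m$ large), so by compactness and Löwenheim--Skolem there is a countable $H\equiv G$ in which $\bar c$ has no finite $\varepsilon_0$-approximately-conjugation-invariant set; thus $H$ is ICC (a first-order consequence of $\mathrm{Th}(G)$) but fails the F\o lner condition, hence is not inner amenable, while $G$, being amenable, is inner amenable.

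\emph{The group.} Fix a prime $p$, set $\mathbb F_{p^\infty}:=\bigcup_m\mathbb F_{p^{m!}}$ and $G:=\mathrm{PSL}_3(\mathbb F_{p^\infty})=\bigcup_m\Lambda_m$, $\Lambda_m:=\mathrm{PSL}_3(\mathbb F_{p^{m!}})$. Being an increasing union of finite groups, $G$ is locally finite, hence amenable. It is ICC: for $w\ne e$ and $m$ large, $C_{\Lambda_m}(w)$ is a proper subgroup of $\Lambda_m$ (otherwise $w$ would centralize $\Lambda_m$, hence be trivial, since $\Lambda_m$ has trivial centralizer in $\mathrm{PGL}_3$ over the algebraic closure), so $w$ has at least $[\Lambda_m:C_{\Lambda_m}(w)]\ge M_m$ conjugates, where $M_m$ is the least index of a proper subgroup of $\Lambda_m$; and $M_m\to\infty$.

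\emph{Non-uniformity.} Fix generating sets $S_m$ of $\Lambda_m$ of a common size $k$ (pad with a repeated generator) enjoying a common Kazhdan constant $\kappa>0$ --- this uses the uniform property (T) of $\{\mathrm{PSL}_3(\mathbb F_{p^k})\}_k$ with respect to suitable bounded generating sets, which follows from property (T) of $\mathrm{SL}_3(\mathbb F_p[t])$ and passage to its finite quotients --- and fix $\varepsilon_0\in(0,\kappa^2/4)$. Then no $A\subseteq G\setminus\{e\}$ with $|A|<M_m/2$ has $|sAs^{-1}\triangle A|<\varepsilon_0|A|$ for all $s\in S_m$: writing $\xi:=|A|^{-1/2}\mathbf 1_A\in\ell^2(G\setminus\{e\})$ and $c$ for the conjugation representation, the hypothesis gives $\|c(s)\xi-\xi\|^2<\varepsilon_0$; letting $P$ be the projection onto the $\Lambda_m$-invariant vectors (functions constant on the finite orbits $O_x$ of $\Lambda_m$-conjugation), $P$ commutes with $c|_{\Lambda_m}$ and $c|_{\Lambda_m}$ has no nonzero invariant vector on $(1-P)\ell^2$, so the Kazhdan constant yields $\|(1-P)\xi\|<\kappa^{-1}\varepsilon_0^{1/2}<\tfrac12$; but $(P\xi)(x)=|O_x|^{-1}\sum_{y\in O_x}\xi(y)$ gives
\[
\|(1-P)\xi\|^2\ \ge\ \frac1{|A|}\sum_{x\in A}\Bigl(1-\frac{|A\cap O_x|}{|O_x|}\Bigr)^2\ \ge\ \Bigl(1-\frac{|A|}{M_m}\Bigr)^2\ >\ \frac14
\]
since $|O_x|\ge M_m$ for $x\ne e$ --- a contradiction. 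Choosing $j(m)$ with $M_{j(m)}>2m$ and setting $\bar g_m:=S_{j(m)}$ exhibits $G$ as not uniformly inner amenable, and the Reduction step completes the proof.

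\emph{Main obstacle.} The substance is entirely in building the group: the inner amenability of an amenable group can be made ``non-uniform'' only by importing genuine expansion, which forces the use of a bona fide Kazhdan family of finite groups, and the two quantitative inputs that drive the argument are not quite routine --- the \emph{uniform} Kazhdan constant realized by \emph{bounded}-size generating sets (bounded length of the $\bar g_m$ being essential for the first-order step), and the divergence of the minimal proper-subgroup index $M_m$ (needed both for the ICC property and to kill small conjugation-invariant sets). The model theory of the Reduction step, the F\o lner characterization of inner amenability, and the Kazhdan--projection estimate are then standard.
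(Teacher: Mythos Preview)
Your proof is correct, but the route is quite different from the paper's, and considerably heavier.

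The paper takes $G=\mathrm{SL}_2(\overline{\mathbb F_2})$ and exhibits $H=\mathrm{SL}_2(L)$ explicitly, for $L$ any algebraically closed field of characteristic $2$ and positive transcendence degree. Elementary equivalence is immediate from completeness of ACF$_2$; non-inner-amenability of $H$ comes from the elementary observation that $\mathrm{SL}_2(F)$ is commutative-transitive in characteristic $2$ (centralizers of nontrivial elements are abelian), together with the general fact that an inner amenable CT group is amenable. No expansion, no Kazhdan constants, and $H$ is concrete.

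Your argument instead proves that $G=\mathrm{PSL}_3(\overline{\mathbb F_p})$ fails \emph{uniform} inner amenability and then extracts $H$ abstractly by compactness. The substantive input is a uniform Kazhdan constant for $\{\mathrm{PSL}_3(\mathbb F_{p^k})\}$ with bounded generating sets, which you correctly source from property~(T) of $\mathrm{SL}_3(\mathbb F_p[t])$ and passage to finite quotients (Ershov--Jaikin-Zapirain); this is a real theorem, not a triviality. Two small points you might tighten: (i) the orbit bound $|O_x|\ge M_m$ is needed for \emph{all} $x\in G\setminus\{e\}$, not just $x\in\Lambda_m$, so one should say explicitly that the centralizer of $\Lambda_m$ in all of $G$ is trivial (absolute irreducibility of the standard representation, as you note parenthetically); (ii) your definition of uniform inner amenability differs cosmetically from the paper's (you take $A\subseteq\Gamma\setminus\{e\}$ rather than requiring $|A|\ge n$), but for ICC groups this is immaterial. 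What your approach buys is robustness---it would work with any locally finite group assembled from a family of finite simple groups forming an expander with bounded generators---at the cost of invoking deep machinery where the paper gets by with a two-case matrix computation.
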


In Section 3, we show how our main result settles a question in the model theory of operator algebras while in Section 4 we explain how to interpret the main result as the existence of an inner amenable but not uniformly inner amenable group.

We thank David Jekel, Yash Lodha, and Jennifer Pi for many useful discussions around this project.

\section{Proof of the main result}

Throughout this note, $K:=\bb F_2^{alg}$ denotes the algebraic closure of the field of two elements while $L$ denotes any algebraically closed field of characteristic $2$ and positive transcendence degree (for example, $L=\bb F_2(t)^{alg}$, where $t$ is an indeterminate).

Recall that for any field $F$, $\sl_2(F)$ denotes the group of $2\times 2$ matrices over $F$ of determinant $1$.

$\sl_2(K)$, being a locally finite group, is amenable.  In fact, for any field $F$, $\sl_2(F)$ is amenable if and only if $F$ is algebraic over a finite field (see, for example, \cite[Proposition 11]{bekka}).  Consequently, we have that 
$\sl_2(L)$ is \emph{not} amenable.


We actually claim that $\sl_2(L)$ is not even \emph{inner} amenable.  To see this, we will need the following fact (for a proof, see, for example, \cite[Theorem 2.21]{juschenko}):

\begin{fact}
Suppose that $G\curvearrowright X$ is an amenable action and that the stabilizer subgroup $G_x$ is amenable for each $x\in X$.  Then $G$ is amenable.
\end{fact}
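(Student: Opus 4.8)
The plan is to work with invariant means rather than finitely additive measures, using the standard equivalence: an action $G\curvearrowright Y$ is amenable precisely when there is a $G$-invariant mean on $\ell^\infty(Y)$ (a positive unital linear functional), and $G$ is amenable precisely when $\ell^\infty(G)$ carries a left-invariant mean. From the amenability of $G\curvearrowright X$ I fix a $G$-invariant mean $m$ on $\ell^\infty(X)$, and for each $G$-orbit $\omega$ in $X$ I choose a basepoint $x_\omega\in\omega$ together with a left-invariant mean $n_\omega$ on $\ell^\infty(G_{x_\omega})$, which exists because the stabilizer $G_{x_\omega}$ is amenable. The goal is to manufacture a left-invariant mean on $\ell^\infty(G)$ by first averaging a function over the fibers of the orbit maps $g\mapsto g x_\omega$, and then averaging the resulting function over $X$ against $m$.

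Concretely, I would first choose, for every $x\in X$, an element $t_x\in G$ carrying the basepoint of $x$'s orbit to $x$ (with $t_{x_\omega}=e$); thus $t_x$ serves as a coset representative, and every $g\in G$ with $gx_\omega=x$ factors uniquely as $g=t_x h$ with $h\in G_{x_\omega}$. Given $f\in\ell^\infty(G)$, I define $F_f\in\ell^\infty(X)$ by averaging $f$ over the coset $t_x G_{x_\omega}$ using the stabilizer mean:
\[
F_f(x)\;=\;n_\omega\big(h\mapsto f(t_x h)\big),\qquad x\in\omega.
\]
Since $|F_f(x)|\le\|f\|_\infty$, this is a bounded function on $X$, and I set $\Phi(f)=m(F_f)$. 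Positivity and $\Phi(1)=1$ follow immediately from the corresponding properties of $m$ and of the $n_\omega$, so the only real content is left-invariance.

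The heart of the argument, and the step I expect to be the main obstacle, is verifying that $\Phi$ is left-invariant. The difficulty is that the transversal $x\mapsto t_x$ is not equivariant: translating by $g_0^{-1}$ sends $x$ to $y:=g_0^{-1}x$ (in the same orbit $\omega$), but $g_0^{-1}t_x$ need not equal $t_y$. Instead the two differ by a stabilizer element, the cocycle $c:=t_y^{-1}g_0^{-1}t_x\in G_{x_\omega}$, since one checks directly that $c$ fixes $x_\omega$. Writing out $F_{g_0\cdot f}(x)=n_\omega\big(h\mapsto f(t_y c h)\big)$ and invoking left-invariance of $n_\omega$ to absorb the factor $c$, I obtain $F_{g_0\cdot f}(x)=F_f(g_0^{-1}x)=(g_0\cdot F_f)(x)$; that is, the fiber-averaging map $f\mapsto F_f$ intertwines the left-regular action on $\ell^\infty(G)$ with the given action on $\ell^\infty(X)$. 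Applying the $G$-invariance of $m$ then yields $\Phi(g_0\cdot f)=\Phi(f)$, so $\Phi$ is a left-invariant mean and $G$ is amenable. The essential point is that left-invariance of the stabilizer means is exactly what cancels the transversal cocycle, while $G$-invariance of $m$ disposes of the remaining orbit-direction translation.
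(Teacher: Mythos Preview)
Your argument is correct: the fiber-then-base averaging via invariant means, with the cocycle $c=t_y^{-1}g_0^{-1}t_x\in G_{x_\omega}$ absorbed by the left-invariance of $n_\omega$, is the standard proof of this result. Note, however, that the paper does not actually supply a proof of this Fact---it simply cites \cite[Theorem 2.21]{juschenko}---so there is no in-paper argument to compare against; your write-up is exactly the kind of proof one finds in such references.
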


By considering the conjugation action of $G$ on $G\setminus \{e\}$, one has, as a corollary, the following:

\begin{fact}\label{mainfact}
If $G$ is inner amenable and the centralizer $C(g)$ of each $g\in G\setminus \{e\}$ is amenable, then $G$ is amenable.
\end{fact}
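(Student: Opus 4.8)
The plan is to deduce this immediately from the preceding Fact by specializing it to the conjugation action $G\curvearrowright G\setminus\{e\}$. The first step is to recall that, by definition, $G$ being inner amenable means exactly that this conjugation action is amenable, i.e.\ there is a finitely additive conjugation-invariant probability measure on $G\setminus\{e\}$. Thus the hypothesis that $G$ is inner amenable supplies precisely the ``amenable action'' hypothesis of the Fact.

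The second step is to identify the point stabilizers of this action. For $g\in G\setminus\{e\}$, an element $h\in G$ fixes $g$ under conjugation precisely when $hgh^{-1}=g$, that is, precisely when $h\in C(g)$. Hence the stabilizer $G_g$ of the point $g$ is the centralizer $C(g)$, which is amenable by hypothesis. (If $G$ is trivial, then $G\setminus\{e\}=\emptyset$, so $G$ is not inner amenable in the sense above and the implication holds vacuously; in any event the trivial group is amenable.)

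With both hypotheses of the Fact now verified — the action $G\curvearrowright G\setminus\{e\}$ is amenable and every stabilizer $G_g=C(g)$ is amenable — the Fact yields that $G$ is amenable, as desired. There is no genuine obstacle here: the only point requiring care is the routine translation between the conjugation-action formulation of the Fact and the centralizer formulation of the statement.
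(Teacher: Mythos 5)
Your proposal is correct and follows exactly the paper's route: the paper also obtains this Fact as an immediate corollary of the preceding one by applying it to the conjugation action of $G$ on $G\setminus\{e\}$, whose point stabilizers are the centralizers $C(g)$. Nothing is missing.
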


Recall that a group $G$ is called \textbf{commutative transitive (CT)} if  $C(g)$ is abelian for all $g\in G\setminus \{e\}$.  (The nomenclature is due to the fact that CT groups are those groups for which, on nontrivial elements, commutation is a transitive relation.)  Since abelian groups are amenable, the previous fact implies:

\begin{cor}\label{innerCT}
An inner amenable CT group is amenable. 
\end{cor}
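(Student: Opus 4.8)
The plan is to obtain Corollary~\ref{innerCT} as an immediate specialization of Fact~\ref{mainfact}; the only thing to verify is that the hypotheses line up. Suppose $G$ is a CT group, so that the centralizer $C(g)$ is abelian for every $g\in G\setminus\{e\}$. Abelian groups are amenable --- this is standard, following for instance from the Markov--Kakutani fixed point theorem, or from the fact that abelian groups satisfy the F\o lner condition --- so $C(g)$ is amenable for every $g\in G\setminus\{e\}$. If in addition $G$ is inner amenable, then both hypotheses of Fact~\ref{mainfact} hold verbatim, and we conclude that $G$ is amenable. (Note that Fact~\ref{mainfact} requires amenability only of the centralizers of the \emph{nontrivial} elements of $G$, which is exactly what CT provides, so there is no issue concerning $C(e)=G$ itself.)

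Since Fact~\ref{mainfact} is itself a one-line consequence of the stabilizer fact applied to the conjugation action $G\curvearrowright G\setminus\{e\}$, whose point stabilizers are precisely the centralizers $C(g)$, there is essentially no obstacle here: all of the work has already been done, and the corollary merely records that ``abelian $\Rightarrow$ amenable'' converts the centralizer hypothesis of Fact~\ref{mainfact} into the definition of CT. Should one prefer a self-contained argument, one could instead inline everything: start from a finitely additive conjugation-invariant mean on $G\setminus\{e\}$, use amenability (here, abelianness) of each centralizer to produce invariant means along each conjugacy class, and average these to obtain a left-invariant mean on $G$; but there is no reason to do this in place of citing the facts above.
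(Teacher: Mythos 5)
Your proof is correct and is exactly the paper's argument: the corollary follows from Fact~\ref{mainfact} once one notes that CT makes every centralizer $C(g)$, $g\neq e$, abelian and hence amenable. Nothing further is needed.
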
 

Thus, to prove that $\sl_2(L)$ is not inner amenable, it suffices to quote the following:

\begin{fact}\label{slipperyfact}
For any field $F$ of characteristic $2$, $\sl_2(F)$ is a CT group.  
\end{fact}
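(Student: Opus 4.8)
The plan is to show directly that every nonidentity element of $\sl_2(F)$ has abelian centralizer, using the characteristic $2$ hypothesis crucially to rule out the one obstruction that arises in other characteristics. Given $g \in \sl_2(F) \setminus \{e\}$, I would split into cases according to the Jordan-type classification of $g$ over the algebraic closure $\overline{F}$: either $g$ is diagonalizable over $\overline{F}$ with two distinct eigenvalues, or $g$ is unipotent (a single eigenvalue, not diagonalizable), or $g$ is a scalar matrix, or $g$ has two distinct eigenvalues not lying in $F$ (an ``irreducible'' element). The scalar case is where characteristic $2$ enters: the only scalars in $\sl_2(F)$ are $\pm I$, and in characteristic $2$ we have $-I = I$, so the only scalar element is the identity, which is excluded. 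This is the key point — in characteristic $\ne 2$ the element $-I$ is central with non-abelian centralizer all of $\sl_2(F)$, which is exactly why the CT property fails there.

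In each of the remaining cases I would compute the centralizer explicitly. If $g$ is regular semisimple (distinct eigenvalues, whether in $F$ or in a quadratic extension), then its centralizer in $\GL_2(\overline F)$ is the maximal torus consisting of matrices that are simultaneously diagonalized with $g$, hence abelian; intersecting with $\sl_2(F)$ keeps it abelian. Concretely, if $g = \mathrm{diag}(\lambda,\mu)$ with $\lambda \ne \mu$, then anything commuting with $g$ must be diagonal. If the eigenvalues are conjugate over $F$, one argues via the $F$-algebra $F[g] \subseteq M_2(F)$, which is a field (a quadratic extension of $F$), and the centralizer of $g$ in $M_2(F)$ is exactly this commutative field $F[g]$, so the centralizer in $\sl_2(F)$ is the norm-one subgroup, which is abelian. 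If $g$ is unipotent, i.e. conjugate to $\left(\begin{smallmatrix}1 & 1\\ 0 & 1\end{smallmatrix}\right)$ (using again that the eigenvalue must be $1$ since it is a square root of $1$ in a field of characteristic $2$, so equals $1$), a direct computation shows the centralizer in $\sl_2(F)$ consists of matrices $\left(\begin{smallmatrix}a & b\\ 0 & a\end{smallmatrix}\right)$ with $a^2 = 1$, i.e. $a = 1$, so the centralizer is $\left\{\left(\begin{smallmatrix}1 & b\\ 0 & 1\end{smallmatrix}\right) : b \in F\right\} \cong (F,+)$, which is abelian.

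A cleaner way to organize all the non-scalar cases uniformly: for $g$ non-scalar, the centralizer of $g$ in the matrix algebra $M_2(F)$ equals $F[g] = \{aI + bg : a,b \in F\}$, which is a commutative $F$-subalgebra of dimension $2$. Indeed, $F[g]$ is contained in the centralizer, and a short argument (e.g. via the fact that $\{I, g\}$ is a basis for $F[g]$ together with a rank computation, or by Cayley--Hamilton) shows there is nothing more. Hence $C_{\sl_2(F)}(g) = F[g] \cap \sl_2(F)$ is abelian, being a subgroup of the multiplicative group of the commutative ring $F[g]$. So the whole proof reduces to the two observations: (1) for non-scalar $g$, $C_{M_2(F)}(g) = F[g]$ is commutative; and (2) in characteristic $2$ the only scalar matrix in $\sl_2(F)$ is $I$. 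I expect step (1) — verifying that the centralizer of a non-scalar $2\times 2$ matrix is no bigger than $F[g]$ — to be the only part requiring any computation, and it is a standard fact (it is essentially the statement that a non-scalar $2 \times 2$ matrix is a cyclic/nonderogatory operator, so its commutant is the algebra it generates); everything else is immediate.
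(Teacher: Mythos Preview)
Your proposal is correct. Your first, case-by-case approach is essentially the paper's argument: the paper passes to the algebraic closure at the outset (so the ``irreducible'' case with eigenvalues outside $F$ disappears) and then does exactly the two direct matrix computations you describe for the diagonal and unipotent Jordan blocks, using characteristic~$2$ only to ensure that a diagonal $g\ne I$ has $a\ne a^{-1}$ and that the unipotent block has eigenvalue $1$. Your second, uniform argument via the commutant $C_{M_2(F)}(g)=F[g]$ for non-scalar $g$ is a genuinely different and cleaner route that the paper does not take: it replaces both matrix computations by the single standard fact that a nonderogatory matrix has commutant equal to the algebra it generates, and isolates the role of characteristic~$2$ purely as the statement that $\sl_2(F)$ contains no non-identity scalar. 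The paper's approach has the advantage of being entirely self-contained and computational; yours has the advantage of explaining conceptually why the result holds and making transparent exactly where characteristic~$2$ is used.
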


Fact \ref{slipperyfact} has a slippery history.  In \cite{weisner}, it was mentioned that $\psl_2(\mathbb F_{2^n})$ is a CT group for any $n\geq 1$.  (Recall that for any field $F$, $\psl_2(F):=\sl_2(F)/\{\pm 1\}$; when $F$ has characteristic $2$, $\psl_2(F)$ coincides with $\sl_2(F)$.)  Strangely enough, no proof of this fact is given, but the author claims ``That this group possesses the property in question follows from its analysis.''  Then a reference is given to \cite[pages 262-265]{dickson}.  From this fact and some basic model theory, one can prove Fact \ref{slipperyfact}; see \cite[Theorem 3.7(1)]{CT}).  Given that this ``proof'' of Fact \ref{slipperyfact} is somewhat incomplete, we prefer to give a detailed proof here; we thank David Jekel for providing us with this proof and for his permission to include it here.  (Not that we will need to know this but, building on work of Suzuki \cite{suzuki} and Wu \cite{Wu}, it follows that $\psl_2(F)$ is \emph{not} a CT group when $F$ has positive characteristic different from $2$; see \cite[Theorem 3.7(2)]{CT}.)  

\begin{proof}[Proof of Fact \ref{slipperyfact}]
Without loss of generality, we may assume that $F$ is algebraically closed.  Fix $g\in \sl_2(F)\setminus \{1\}$.  Since $F$ is algebraically closed, $g$ is similar to a matrix $g'$ in $\sl_2(F)$ that is in Jordan canonical form (see \cite[Chapter XIV, Corollary 2.5]{lang}).  Since $g$ and $g'$ have isomorphic centralizers in $\sl_2(F)$, we may assume that $g$ itself is in Jordan canonical form.

First suppose that $g$ is diagonal.  Then $g=\begin{pmatrix} a & 0 \\ 0 & a^{-1}\end{pmatrix}$ for some $a\in F$.  Fix $h=\begin{pmatrix}
c & d \\ e & f
\end{pmatrix}\in C(g)$.  Since $gh=\begin{pmatrix}
    ac & ad \\ a^{-1}e & a^{-1} f
\end{pmatrix}$ while $hg=\begin{pmatrix}
    ac & a^{-1}d \\ ae & a^{-1}f
\end{pmatrix}$, we see that $(a-a^{-1})d=(a-a^{-1})e=0$; since $a\not=1$ (else $g=1$), we have that $a-a^{-1}\not=0$ (recall that $F$ has characteristic $2$), whence $d=e=0$.  It follows that $C(g)$ is the set of diagonal matrices in $\sl_2(F)$, which is certainly abelian.

Now suppose that $g$ is not diagonal.  In this case, we have that $g=\begin{pmatrix}
    1 & 1 \\ 0 & 1
\end{pmatrix}$ for some $a\in F\setminus\{0\}$.  Once again, fix $h=\begin{pmatrix}
c & d \\ e & f
\end{pmatrix}\in C(g)$.  This time we have $gh=\begin{pmatrix}
    c+e & d+f \\ e & f
\end{pmatrix}$ while $hg=\begin{pmatrix}
    c & c+d \\ e & e+f
\end{pmatrix}$.  We thus see that $e=0$ and $c=f$.  It follows that $C(g)$ is the set of upper triangular matrices in $\sl_2(F)$ whose diagonal elements are the same; this group is easily checked to be abelian.
\end{proof}

Recalling that $\sl_2(L)$ is not amenable, Corollary \ref{innerCT} and Fact \ref{slipperyfact} imply that:

\begin{cor}
$\sl_2(L)$ is not inner amenable.
\end{cor}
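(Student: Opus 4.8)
The plan is to argue by contradiction, feeding the hypotheses directly into the machinery already assembled. First I would suppose, toward a contradiction, that $\sl_2(L)$ is inner amenable. Since $L$ has characteristic $2$ by our standing convention, Fact \ref{slipperyfact} applies and tells us that $\sl_2(L)$ is a CT group. Corollary \ref{innerCT} then says that an inner amenable CT group is amenable, so we would conclude that $\sl_2(L)$ is amenable.

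Next I would invoke the amenability criterion for special linear groups already recalled above: by \cite[Proposition 11]{bekka}, $\sl_2(F)$ is amenable if and only if $F$ is algebraic over a finite field. But $L$ was chosen to be an algebraically closed field of characteristic $2$ of \emph{positive} transcendence degree, hence is not algebraic over $\bb F_2$, so $\sl_2(L)$ is not amenable. This contradicts the previous paragraph, and therefore $\sl_2(L)$ cannot be inner amenable.

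I do not expect any real obstacle, since the entire substance of the corollary has been relocated into the preceding results: the CT property of $\sl_2$ in characteristic $2$ (Fact \ref{slipperyfact}), the implication ``inner amenable $+$ CT $\Rightarrow$ amenable'' (Corollary \ref{innerCT}), and the non-amenability of $\sl_2(L)$. The only point deserving a moment's care is bookkeeping: checking that the hypothesis of Corollary \ref{innerCT} is met verbatim — namely that ``CT'' is precisely what Fact \ref{slipperyfact} supplies and that the characteristic-$2$ assumption needed for that fact is exactly the one built into the definition of $L$ — which it is.
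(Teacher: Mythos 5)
Your argument is correct and is exactly the paper's proof: the corollary follows by combining Fact \ref{slipperyfact} (CT in characteristic $2$), Corollary \ref{innerCT} (inner amenable $+$ CT $\Rightarrow$ amenable), and the non-amenability of $\sl_2(L)$ from Bekka's criterion, with the contradiction framing being just the contrapositive of the paper's one-line deduction. No gaps.
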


Finally, we record:

\begin{lem}\label{eelemma}
$\sl_2(K)$ and $\sl_2(L)$ are elementarily equivalent.
\end{lem}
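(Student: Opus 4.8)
The plan is to deduce the elementary equivalence of $\sl_2(K)$ and $\sl_2(L)$ from the classical model theory of algebraically closed fields. Recall that the theory $\mathrm{ACF}_p$ of algebraically closed fields of fixed characteristic $p$ is complete; hence $K$ and $L$, both being algebraically closed of characteristic $2$, are elementarily equivalent as fields. The key point is then that the group $\sl_2(F)$ is \emph{uniformly interpretable} in the field $F$: its underlying set is the definable subset $\{(x_{11},x_{12},x_{21},x_{22}) : x_{11}x_{22}-x_{12}x_{21}=1\}$ of $F^4$, and the group multiplication is given by the fixed polynomial formulas for matrix multiplication. Since this interpretation is given by a single fixed scheme of formulas that does not depend on $F$, elementary equivalence of the fields transfers to elementary equivalence of the interpreted groups.

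Concretely, I would carry out the following steps. First, recall (or cite a standard reference such as Marker's \emph{Model Theory} or Chang--Keisler) that $\mathrm{ACF}_p$ is complete, so that $K \equiv L$ in the language of rings. Second, spell out the interpretation: fix the formula $\varphi(x_{11},x_{12},x_{21},x_{22})$ expressing $\det = 1$, and the term-definable multiplication and inverse on the solution set of $\varphi$; note that one must also handle the group language appropriately (encoding a quadruple as an element, or working with the many-sorted/relativized structure). Third, invoke the general model-theoretic fact that if a structure $N$ is interpreted in a structure $M$ by a fixed interpretation scheme $\Phi$, then $M \equiv M'$ implies $\Phi(M) \equiv \Phi(M')$; applying this with $M = K$, $M' = L$ and $\Phi$ the $\sl_2$-interpretation yields $\sl_2(K) \equiv \sl_2(L)$.

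Alternatively, and perhaps more transparently for a reader, one can argue directly by translation of sentences: every first-order sentence $\sigma$ in the language of groups translates, via the interpretation, into a first-order sentence $\sigma^*$ in the language of rings such that $\sl_2(F) \models \sigma$ if and only if $F \models \sigma^*$, for every field $F$; then $\sl_2(K) \models \sigma \iff K \models \sigma^* \iff L \models \sigma^* \iff \sl_2(L) \models \sigma$, using $K \equiv L$ in the middle equivalence. This avoids quoting a black-box interpretation theorem and makes the argument self-contained modulo the completeness of $\mathrm{ACF}_2$.

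The only genuinely delicate point is bookkeeping around the interpretation: since elements of $\sl_2(F)$ are $4$-tuples rather than single field elements, one is really interpreting the group \emph{on a definable quotient of a power} of the field, and one should either appeal to the standard (and routine) fact that such interpretations preserve elementary equivalence, or carry the $4$-tuples through the sentence translation explicitly. I expect this to be entirely routine — there is no quotient needed here since the determinant-one condition is already a definable subset of $F^4$ — so the ``hard part'' is merely stating the interpretation machinery cleanly rather than any real mathematical obstacle. Everything else (completeness of $\mathrm{ACF}_2$, $K$ and $L$ both being models of it) is standard.
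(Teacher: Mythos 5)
Your proof is correct, and both arguments share the same first step: $K\equiv L$ because the theory of algebraically closed fields of a fixed characteristic is complete. Where you diverge is in how elementary equivalence of the fields is transferred to the groups. The paper takes the ultrapower route: by Keisler--Shelah, $K$ and $L$ have isomorphic ultrapowers, and since forming $\sl_2$ commutes with ultrapowers (by \L o\'s's theorem), $\sl_2(K)$ and $\sl_2(L)$ have isomorphic ultrapowers and hence are elementarily equivalent. You instead observe that $\sl_2(F)$ is uniformly interpretable in $F$ --- the underlying set is the definable subset of $F^4$ cut out by $\det=1$, with polynomial multiplication --- and that interpretations given by a fixed scheme of formulas preserve elementary equivalence; equivalently, you translate each group-theoretic sentence $\sigma$ into a ring-theoretic sentence $\sigma^*$ with $\sl_2(F)\models\sigma \iff F\models\sigma^*$ for every field $F$. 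Your route is more self-contained and arguably more elementary: it avoids Keisler--Shelah (a genuinely heavy theorem) and makes the transfer explicit and quantitative, at the cost of some bookkeeping about interpretations on tuples (which, as you note, involves no quotient here and is routine). The paper's route is quicker to state, since ``$\sl_2$ commutes with ultrapowers'' is a one-line appeal to \L o\'s, but it leans on Keisler--Shelah as a black box. Either argument fully establishes the lemma.
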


\begin{proof}
Since $K$ and $L$ are algebraically closed fields of the same characteristic, we have that $K$ and $L$ are elementarily equivalent (see \cite[Proposition 2.2.5]{marker}).  It remains to quote a standard fact: if two fields $F_1$ and $F_2$ are elementarily equivalent, then so are the groups $\sl_2(F_1)$ and $\sl_2(F_2)$.  (There are many ways to see this, but perhaps the quickest is to use the Keisler-Shelah theorem to conclude that $K$ and $L$ have isomorphic ultrapowers and then use the fact that taking $\sl_2$ of a field commutes with ultrapowers.)
\end{proof} 

We note the following well-known fact.  

\begin{prop}
For any algebraically closed field $F$ of characteristic $2$, $\sl_2(F)$ is an ICC group.
\end{prop}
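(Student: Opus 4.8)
The plan is to piggyback on the Jordan-form reduction already carried out in the proof of Fact \ref{slipperyfact}, and then in each of the two cases write down an explicit infinite family of conjugates. First I would note that since $F$ has characteristic $2$ the centre of $\sl_2(F)$ is $\{\pm 1\}=\{1\}$, so every $g\neq 1$ is noncentral; and that the cardinality of the conjugacy class of $g$ equals the index $[\sl_2(F):C(g)]$, which is unchanged on replacing $g$ by a conjugate. Hence, exactly as before, we may assume that $g\neq 1$ is in Jordan canonical form. The only feature of $F$ needed beyond what is recorded in the excerpt is that $F$, being algebraically closed, is infinite.

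In the diagonal case $g=\operatorname{diag}(a,a^{-1})$ with $a\neq 1$, conjugating by $\begin{pmatrix}1&t\\0&1\end{pmatrix}$ produces $\begin{pmatrix}a&t(a+a^{-1})\\0&a^{-1}\end{pmatrix}$; as in the proof of Fact \ref{slipperyfact}, $a\neq 1$ forces $a+a^{-1}\neq 0$ in characteristic $2$, so these matrices are pairwise distinct as $t$ ranges over the infinite field $F$, and the conjugacy class of $g$ is infinite. In the remaining (unipotent) case $g=\begin{pmatrix}1&1\\0&1\end{pmatrix}$, conjugating by $\operatorname{diag}(s,s^{-1})$ produces $\begin{pmatrix}1&s^2\\0&1\end{pmatrix}$, and since the Frobenius $s\mapsto s^2$ is injective the off-diagonal entry takes infinitely many values, so the conjugacy class is infinite in this case too. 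Since every nontrivial element of $\sl_2(F)$ is conjugate to one of these two forms, $\sl_2(F)$ is ICC.

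I do not anticipate any genuine obstacle; the only thing to keep track of is the characteristic-$2$ arithmetic, all of which is already in play in the proof of Fact \ref{slipperyfact}. If a less computational route is wanted, one can instead invoke the simplicity of $\psl_2(F)=\sl_2(F)$ (which holds since $|F|\geq 4$): were some $g\neq 1$ to have a finite conjugacy class, then $C(g)$ would have finite index, so its normal core $N$ would be a normal subgroup of finite index; simplicity then forces either $N=\{1\}$, which is absurd since $\sl_2(F)$ is infinite, or $N=\sl_2(F)$, which makes $g$ central and hence equal to $1$. I would most likely present the direct argument, since it is self-contained and reuses the centralizer computation already on the page.
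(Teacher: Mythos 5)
Your proposal is correct and follows essentially the same route as the paper: reduce to Jordan canonical form and exhibit, in the diagonal case, conjugates with off-diagonal entry $t(a+a^{-1})$ (the paper conjugates by $\begin{pmatrix} b & 1 \\ 0 & b^{-1}\end{pmatrix}$ rather than a unipotent, yielding the same family), and in the unipotent case conjugates $\begin{pmatrix}1 & c^2 \\ 0 & 1\end{pmatrix}$ by diagonal matrices. The only differences are cosmetic (your explicit appeal to injectivity of Frobenius and to infinitude of the index $[\sl_2(F):C(g)]$ merely spells out steps the paper leaves implicit), and your alternative argument via simplicity of $\psl_2(F)$ is a fine, though unused, variant.
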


\begin{proof}
Fix $g\in \sl_2(F)\setminus \{e\}$; as before, we may suppose that $g$ is in Jordan canonical form.  If $g=\begin{pmatrix}
    a & 0 \\ 0 & a^{-1}
\end{pmatrix}$, then it remains to note that
$$\begin{pmatrix}
    b & 1 \\ 0 & b^{-1}
\end{pmatrix}^{-1} \begin{pmatrix}
    a & 0 \\ 0 & a^{-1}
\end{pmatrix} \begin{pmatrix}
    b & 1 \\ 0 & b^{-1}
\end{pmatrix}=\begin{pmatrix}
    a & (a+a^{-1})b \\ 0 & a^{-1}
\end{pmatrix}.$$  Since $a+a^{-1}\not=0$ (lest $g=e$, recalling that the characteristic of $F$ is $2$), we see that by letting $b$ vary over $F\setminus \{0\}$, we find infinitely many conjugates of $g$.  

The other case is that $g=\begin{pmatrix}
    1 & 1 \\ 0 & 1
\end{pmatrix}$ with $a\in F\setminus\{0\}$.  In this case, we note that, for $b,c\in F$ with $bc=1$, we have 

$$\begin{pmatrix}
    b & 0 \\ 0 & c
\end{pmatrix}^{-1} \begin{pmatrix}
    1 & 1 \\ 0 & 1
\end{pmatrix} \begin{pmatrix}
    b & 0 \\ 0 & c
\end{pmatrix}=\begin{pmatrix}
    1 & c^2 \\ 0 & 1
\end{pmatrix}.$$
Once again, letting $c$ range over $F\setminus \{0\}$, we find infinitely many conjugates of $g$.
\end{proof}

The previous proposition holds in greater generality:  $\psl_2(F)$ is ICC for any algebraically closed field of positive characteristic; we assumed that the field had characteristic $2$ just to give a simpler proof and since this is the only case that we need.

\section{An application to operator algebras}

Theorem \ref{main} above allows us to provide a negative solution to a question asked by many researchers in the model theory of operator algebras, namely, does elementary equivalence of groups imply the elementary equivalence of their group von Neumann algebras?

\begin{prop}
There are elementarily equivalent countable ICC groups $G$ and $H$ such that their group von Neumann algebras $L(G)$ and $L(H)$ are not elementarily equivalent.
\end{prop}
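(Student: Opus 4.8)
The plan is to use the very groups $G$ and $H$ furnished by Theorem \ref{main}; recall that its proof exhibits these as $G = \sl_2(K)$ and $H = \sl_2(L)$. These are elementarily equivalent countable ICC groups with $G$ amenable and $H$ not inner amenable. Since both are ICC, $L(G)$ and $L(H)$ are $\mathrm{II}_1$ factors, and the task reduces to separating them by a single sentence in the (continuous) first-order language of tracial von Neumann algebras.

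The invariant that does the job is the Murray--von Neumann property Gamma. First I would note that $L(G)$ has property Gamma: since $G$ is amenable and ICC, $L(G)$ is approximately finite dimensional, hence isomorphic to the hyperfinite $\mathrm{II}_1$ factor $\cR$ by the Murray--von Neumann uniqueness theorem, and $\cR \cong \cR \bar\otimes \cR$ is McDuff and in particular has property Gamma. Next I would invoke Effros's theorem that, for an ICC group $H$, if $L(H)$ has property Gamma then $H$ is inner amenable; contrapositively, since $H = \sl_2(L)$ is not inner amenable, $L(H)$ does \emph{not} have property Gamma.

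To conclude, I would appeal to the fact, established in the model theory of operator algebras, that property Gamma is axiomatizable among $\mathrm{II}_1$ factors, so that two elementarily equivalent $\mathrm{II}_1$ factors either both have property Gamma or both lack it. Since $L(G)$ has property Gamma and $L(H)$ does not, $L(G)$ and $L(H)$ are not elementarily equivalent, which is what we wanted.

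I do not anticipate a genuine obstacle: the argument is an assembly of known results. The only points demanding care are the precise formulations and citations of the two black boxes invoked here --- Effros's implication ``$L(H)$ has property Gamma $\Rightarrow$ $H$ is inner amenable'' for ICC $H$, and the first-order axiomatizability of property Gamma for $\mathrm{II}_1$ factors --- together with the immediate remark that it is precisely the ICC hypothesis on $G$ and $H$ that makes $L(G)$ and $L(H)$ factors, so that both results are applicable.
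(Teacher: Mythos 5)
Your proposal is correct and follows essentially the same route as the paper: take $G=\sl_2(K)$, $H=\sl_2(L)$, identify $L(G)$ with the hyperfinite II$_1$ factor, use Effros to conclude $L(H)$ lacks property Gamma, and invoke the axiomatizability of property Gamma. The only quibble is attribution: the step ``$G$ amenable ICC $\Rightarrow L(G)$ approximately finite dimensional'' is precisely Connes's theorem (which the paper cites), not a consequence of the Murray--von Neumann uniqueness theorem alone, though for the specific locally finite group $\sl_2(K)$ one could also see AFD-ness directly.
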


\begin{proof}
The groups $G:=\sl_2(K)$ and $H:=\sl_2(L)$ yield the desired example.  Indeed, since $G$ is a countable, amenable, ICC group, by Connes' landmark result in \cite{connes}, $L(G)$ is the hyperfinite II$_1$ factor $\cal R$.  However, by a result of Effros \cite{effros}, since $H$ is not inner amenable, $L(H)$ does not have property Gamma.  Since $\cal R$ has property Gamma and property Gamma is an axiomatizable property of tracial von Neumann algebras (see \cite[3.2.2]{mtoa3}), we have that $L(G)$ and $L(H)$ are not elementarily equivalent.
\end{proof}

\section{Uniformly inner-amenable groups}

If $G$ is a group, $S\subseteq G$ is finite, and $\epsilon>0$, then a \textbf{$(S,\epsilon)$-F\o lner set in $G$} is a finite set $T\subseteq G$ such that $|gT\triangle T|<\epsilon |T|$ for all $g\in S$.  A group $G$ is amenable if and only if $(S,\epsilon)$-F\o lner sets in $G$ exist for all finite $S\subseteq G$ and all $\epsilon>0$.  $G$ is said to be \textbf{uniformly amenable} if there is a function $f:\bb N\to \bb N$ such that, for all finite $S\subseteq G$, if $|S|\leq n$, then there is a $(S,\frac{1}{n})$-F\o lner set $T\subseteq G$ with $|T|\leq f(n)$.
In \cite{keller}, Keller studied uniformly amenable groups, showing that $G$ is uniformly amenable if and only if some (equivalently any) nonprincipal ultrapower of $G$ is amenable.  (Keller used the language of nonstandard extensions rather than ultrapowers, but the arguments are identical in either case.)  It then follows that $G$ is uniformly amenable if and only if $H$ is amenable whenever $H$ is elementarily equivalent to $G$.  Indeed, one direction follows from the fact that an ultrapower of $G$ is elementarily equivalent to $G$, while the other direction follows from the fact that any group elementarily equivalent to $G$ embeds (elementarily) into some ultrapower of $G$ and the fact that subgroups of amenable groups are amenable.  

There are plenty of uniformly amenable groups; for example, any solvable group is uniformly amenable.  An example of an amenable group that is not uniformly amenable is the locally finite group $S_\infty:=\bigcup_{n\geq 1}S_n$ as it is fairly straightforward to find a copy of a nonabelian free group inside of an ultrapower of $S_\infty$.

In \cite{KTD2021dynamical}, an appropriate notion of F\o lner set for inner amenable groups was discussed (but not named): 

\begin{defn}
For nonempty finite subsets $S,T\subseteq G$ and $\epsilon>0$,  we say that $T$ is a \textbf{$(S,\epsilon)$-c-F\o lner set} if $|gTg^{-1}\triangle T|<\epsilon|T|$ for all $g\in S$.
\end{defn}

The following fact is mentioned in \cite[Section 6]{KTD2021dynamical}:

\begin{fact}
A group $G$ is inner amenable if and only if:  for all finite $S\subseteq G$ and all $\epsilon>0$, there are arbitrarily large  $(S,\epsilon)$-c-F\o lner sets in $G$.
\end{fact}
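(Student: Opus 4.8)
The plan is to transport the standard Følner-type characterization of amenable actions to the conjugation action $G \curvearrowright G \setminus \{e\}$, keeping careful track of the fact that conjugation always fixes $e$, which is the only real subtlety. Recall that an action $G \curvearrowright X$ is amenable precisely when, for every finite $S \subseteq G$ and every $\epsilon > 0$, there is a finitely supported probability measure $\mu$ on $X$ with $\|s_*\mu - \mu\|_1 < \epsilon$ for all $s \in S$ (the Reiter-type condition), and by a standard convexity/averaging argument this is equivalent to the existence of a finite nonempty $F \subseteq X$ with $|sF \triangle F| < \epsilon |F|$ for all $s \in S$ (the Følner condition for actions). For the conjugation action this Følner set $F$ is exactly a $(S,\epsilon)$-c-Følner set contained in $G \setminus \{e\}$. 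So the first step is: $G$ is inner amenable iff for all finite $S$ and all $\epsilon > 0$ there is a $(S,\epsilon)$-c-Følner set in $G \setminus \{e\}$.

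Second, I would upgrade "there exists one" to "there exist arbitrarily large ones." The standard trick is that Følner sets can be disjointified and unioned: if the conjugation action is amenable, a classical argument (for left-translation amenability it is the Namioka-style paradoxical-decomposition-free proof; for general amenable actions it is the same) shows one can find Følner sets of arbitrarily large cardinality — concretely, given a $(S, \epsilon/2)$-c-Følner set $F$, one can use a second, "almost invariant under a larger system" Følner set to produce, by a pigeonhole/averaging argument over translates, a $(S,\epsilon)$-c-Følner set of any prescribed large size, or more simply one observes the set of attainable cardinalities of $(S,\epsilon)$-c-Følner sets is unbounded because $G$ itself is infinite (every nontrivial conjugacy class is infinite is not needed here — infiniteness of $G$ suffices, and if $G$ is finite it is amenable hence inner amenable and the empty condition on "arbitrarily large" is vacuous or handled separately).

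The third point is the only genuine wrinkle: a $(S,\epsilon)$-c-Følner set $T \subseteq G$ as defined in the Definition is allowed to contain $e$, whereas the action is on $G \setminus \{e\}$. But this is harmless: if $e \in T$, then $T \setminus \{e\}$ satisfies $|gTg^{-1}\triangle T| \geq |g(T\setminus\{e\})g^{-1} \triangle (T \setminus \{e\})|$ minus a bounded error coming from the single point $e$ (which is fixed by conjugation, so it contributes nothing to the symmetric difference anyway), so $T \setminus \{e\}$ is still a c-Følner set for a slightly worse $\epsilon$ once $|T|$ is large; and conversely a c-Følner set in $G \setminus \{e\}$ is a c-Følner set in $G$. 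Since we are quantifying over all $\epsilon$ and asking only for arbitrarily large sets, the presence or absence of $e$ is immaterial. Thus the two conditions match up.

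The main obstacle — such as it is — is therefore purely bookkeeping: correctly matching the "Følner set for an action" formalism (measures on $X = G \setminus \{e\}$) with the literal Definition above (finite subsets of $G$), and confirming that the passage from "a Følner set exists" to "arbitrarily large Følner sets exist" goes through verbatim as in the translation-amenable case since it uses only that the convex combinations / disjoint copies argument never referenced left multiplication specifically. I would cite \cite{KTD2021dynamical} and \cite{juschenko} for the underlying amenable-action machinery rather than redeveloping it, and present the proof as a short reduction to those standard facts.
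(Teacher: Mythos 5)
Your first and third steps are fine: the standard mean--Reiter--Namioka chain does characterize amenability of the conjugation action on $G\setminus\{e\}$ by the existence, for every $(S,\epsilon)$, of a c-F\o lner set avoiding $e$, and since $e$ is a fixed point of conjugation its presence in a \emph{large} set is harmless. The genuine gap is your second step: the upgrade from ``some c-F\o lner set exists'' to ``arbitrarily large ones exist'' is not a formality, and both of your justifications fail. The disjointification trick for ordinary F\o lner sets uses right translates $Fg$, which stay F\o lner because right multiplication commutes with the left action; conjugation has no such commuting family (a conjugate $hTh^{-1}$ is almost invariant only for the conjugated set $h^{-1}Sh$, and all conjugates may coincide, e.g.\ when $T$ lies in the center). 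And the claim that attainable cardinalities are unbounded ``because $G$ is infinite'' is false: take $H$ not inner amenable (say $\sl_2(L)$ from this paper) and $G=H\times \bb Z/2\bb Z$. The nontrivial central element is a fixed point of the conjugation action, so $G$ carries a conjugation-invariant mean on $G\setminus\{e\}$ and has perfectly invariant c-F\o lner sets of sizes $1$ and $2$; yet for suitable finite $S\subseteq H\times\{0\}$ and $\epsilon>0$ every $(S,\epsilon)$-c-F\o lner set in $G$ has bounded cardinality, because a large one would contain a slice in $H$ of at least half its size that is an $(S_0,2\epsilon)$-c-F\o lner set in $H$, and arbitrarily large such slices (delete $e$, take a weak* limit of normalized counting measures) would make $H$ inner amenable.

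So largeness is exactly where the conjugacy-class structure enters, contrary to your parenthetical claim that ICC-type considerations are not needed. The equivalence in the Fact requires the invariant mean to vanish on finite sets (equivalently, the almost-invariant finitely supported measures to have uniformly small atoms), which holds automatically when invariance kills point atoms --- e.g.\ for the ICC groups of this paper, where an atom at $x\neq e$ would spread over the infinite conjugacy class of $x$ --- and which is built into the convention of \cite{KTD2021dynamical}; note the paper itself does not prove the Fact but cites that source. A correct write-up must either make that hypothesis explicit or supply the missing argument: starting from a mean vanishing on finite sets, produce almost-conjugation-invariant finitely supported probability measures with maximum atom at most $\delta$, and then choose the Namioka level set so that it is both almost invariant and of size at least roughly $1/\delta$. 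That extra step is the real content of the Fact and is absent from your proposal.
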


We say that $G$ is \textbf{uniformly inner amenable} if there is a function $f:\bb N\to \bb N$ such that, for all finite subsets $S\subseteq G$ with $|F|\leq n$, there is a finite subset $T\subseteq G$ with $n\leq |T|\leq f(n)$ that is a $(S,\frac{1}{n})$-c-F\o lner set for $G$.  One can prove an analogue of Keller's result for amenable groups as follows:

\begin{prop}\label{uniformlyamenableprop}
For a group $G$, the following are equivalent:
\begin{enumerate}
    \item $G$ is uniformly inner amenable.
    \item If $H$ is elementarily equivalent to $G$, then $H$ is inner amenable.
    \item Every ultrapower of $G$ is inner amenable.
\end{enumerate}
\end{prop}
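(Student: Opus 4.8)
The plan is to establish the cycle of implications (1) $\Rightarrow$ (3) $\Rightarrow$ (2) $\Rightarrow$ (1), mirroring the discussion of Keller's theorem given above for uniform amenability. The implications (3) $\Rightarrow$ (2) and (2) $\Rightarrow$ (1) should be the soft parts. For (3) $\Rightarrow$ (2): if $H \equiv G$, then by Keisler--Shelah $H$ embeds elementarily into some ultrapower $G^{\mathcal U}$ of $G$; since $G^{\mathcal U}$ is inner amenable by (3), it suffices to know that inner amenability passes to subgroups that contain (up to the obvious identification) the conjugacy structure—but one must be slightly careful here, since inner amenability is \emph{not} inherited by arbitrary subgroups. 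The correct statement is that if $H \leq G^{\mathcal U}$ and $H$ is an ICC-like situation is not what we want; instead the clean route is: $H \equiv G$ means $H$ and $G$ have a common ultrapower (Keisler--Shelah), so it is enough to show that $G^{\mathcal U}$ inner amenable implies $H^{\mathcal U}$ inner amenable for the isomorphic ultrapowers, which is immediate. So I would phrase (3) $\Rightarrow$ (2) via: $H \equiv G$ $\Rightarrow$ $H^{\mathcal U} \cong G^{\mathcal V}$ for suitable ultrafilters $\Rightarrow$ $H^{\mathcal U}$ is inner amenable $\Rightarrow$ $H$ is inner amenable, using that inner amenability \emph{does} pass from an ultrapower $H^{\mathcal U}$ down to $H$ (a c-F\o lner set in $H^{\mathcal U}$ with support in the image of $H$ can, by {\L}o\'s's theorem applied to the internal counting measure, be pushed down—or, more simply, use the mean-theoretic definition and restrict the mean on $\ell^\infty(H^{\mathcal U} \setminus \{e\})$ to $\ell^\infty(H \setminus\{e\})$, checking $H$-conjugation-invariance).

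For (2) $\Rightarrow$ (1): I argue by contraposition. If $G$ is not uniformly inner amenable, then for every $f : \bb N \to \bb N$ there is an $n$ and a finite $S \subseteq G$ with $|S| \leq n$ admitting no $(S, \tfrac1n)$-c-F\o lner set $T$ with $n \leq |T| \leq f(n)$. One then diagonalizes: build a sequence $(n_k, S_k)$ of witnesses (using $f = k \mapsto k$, then refinements) and form an ultraproduct $\prod_{k \to \mathcal U} G$, inside which the image of $G$ under the diagonal embedding will fail to admit small enough c-F\o lner sets relative to the internal cardinality; more precisely, one produces a single countable $H \equiv G$ (an elementary substructure of an ultrapower, via downward L\"owenheim--Skolem) in which, for some finite $S$ and some $\epsilon$, there is no arbitrarily large $(S,\epsilon)$-c-F\o lner set, contradicting the Fact characterizing inner amenability. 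The key device is the standard "uniform version via compactness/ultraproducts" trick: the failure of uniformity is exactly a first-order-indescribable growth phenomenon that an ultrapower captures.

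For (1) $\Rightarrow$ (3): Suppose $G$ is uniformly inner amenable with bounding function $f$, and let $G^{\mathcal U}$ be an ultrapower. Fix a finite $S \subseteq G^{\mathcal U}$, say $|S| = m$, and $\epsilon = \tfrac1n$ with $n \geq m$. Represent each element of $S$ as a class $[s^{(i)}_\bullet]$. For each index $j$, the finite set $S_j := \{s^{(1)}_j, \dots, s^{(m)}_j\} \subseteq G$ has size $\leq n$, so by uniform inner amenability there is a c-F\o lner set $T_j \subseteq G$ with $n \leq |T_j| \leq f(n)$ and $|g T_j g^{-1} \triangle T_j| < \tfrac1n |T_j|$ for all $g \in S_j$. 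Since $|T_j|$ is bounded by $f(n)$, after passing to the ultrafilter the cardinalities $|T_j|$ are eventually constant, equal to some $N$ with $n \leq N \leq f(n)$; choosing enumerations $T_j = \{t^{(1)}_j, \dots, t^{(N)}_j\}$ and setting $T := \{[t^{(1)}_\bullet], \dots, [t^{(N)}_\bullet]\} \subseteq G^{\mathcal U}$ gives, by {\L}o\'s's theorem, a set of size exactly $N \geq n$ that is a $(S,\tfrac1n)$-c-F\o lner set in $G^{\mathcal U}$. Thus $G^{\mathcal U}$ admits arbitrarily large c-F\o lner sets for every $(S,\epsilon)$, hence is inner amenable. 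The one point requiring care—and the main obstacle—is the bookkeeping that $|g T_j g^{-1} \triangle T_j|$ is a quantity {\L}o\'s's theorem can see: because $|T_j| = N$ is constant and the symmetric-difference count is an integer $< N$, the statement "$T$ is an $(S,\tfrac1n)$-c-F\o lner set of size $N$" is a genuine first-order statement about the tuple $(s^{(i)}, t^{(k)})$, so it transfers. Everything else is routine.
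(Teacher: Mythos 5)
Your overall architecture (1)~$\Rightarrow$~(3)~$\Rightarrow$~(2)~$\Rightarrow$~(1) is viable, and two thirds of it are essentially right. The \L o\'s argument for (1)~$\Rightarrow$~(3) is correct as you give it, and your (3)~$\Rightarrow$~(2) works: the descent from $H^{\mathcal U}$ to $H$ is legitimate because the c-F\o lner condition for a finite $S\subseteq H$ (diagonally embedded) and a given size is an existential statement with parameters in $H$, so it transfers down the elementary diagonal embedding via \L o\'s --- this is exactly the content of the paper's Lemma \ref{existentiallemma}, which the paper uses to run the shorter route you first considered and then abandoned (embed $H$ elementarily into $G^{\mathcal U}$ directly; no Keisler--Shelah needed). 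Your alternative ``restrict the mean'' justification is loosely phrased: there is no restriction map from means on $\ell^\infty(H^{\mathcal U}\setminus\{e\})$ to means on $\ell^\infty(H\setminus\{e\})$; what works is composing the mean with the canonical positive unital $H$-equivariant embedding $f\mapsto \tilde f$, $\tilde f([x_j]) = \lim_{\mathcal U} f(x_j)$, and the absence of such a map for general subgroups is precisely why inner amenability does not pass to subgroups.

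The genuine gap is in (2)~$\Rightarrow$~(1). The negation of uniform inner amenability you work with --- for every $f$ there exist $n$ and $S$ with $|S|\le n$ and no $(S,\tfrac1n)$-c-F\o lner set of size in $[n,f(n)]$ --- lets $n$ depend on $f$, and the diagonalization you sketch over witnesses $(n_k,S_k)$ with possibly unbounded $n_k$ produces in the ultraproduct only an internal set $[S_k]$ of possibly infinite size together with an infinitesimal tolerance $1/n_k$; that contradicts nothing, since inner amenability quantifies only over genuinely finite $S$ and standard $\epsilon>0$. (Also, the failure you need to arrange is the nonexistence of \emph{arbitrarily large} c-F\o lner sets for a fixed finite $S$ and fixed $\epsilon$, not the absence of ``small enough'' ones.) The missing step is the quantifier exchange: if $G$ is not uniformly inner amenable, there is a \emph{single} $n$ such that for every $m$ there is $S_m\subseteq G$ with $|S_m|\le n$ admitting no $(S_m,\tfrac1n)$-c-F\o lner set of size in $[n,m]$ --- otherwise setting $f(n)$ to be the least such $m$ for each $n$ would witness uniformity. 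Only with this fixed $n$ in hand does compactness (equivalently, the ultraproduct of the $S_m$) yield an elementary extension $H\equiv G$ containing one finite set $S$ with $|S|\le n$ and no $(S,\tfrac1n)$-c-F\o lner set of cardinality at least $n$, whence $H$ is not inner amenable; this is exactly how the paper argues. Your ``more precisely'' sentence asserts that conclusion, but the construction you describe does not deliver the fixed finite $S$ and fixed $\epsilon$ that it requires.
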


Before beginning the proof of Proposition \ref{uniformlyamenableprop}, we record one fact.  Recall that if $H$ is a subgroup of $G$, then $H$ is \textbf{existentially closed in $G$} if:  for every existential sentence $\varphi$ with parameters in $H$, if $\varphi$ is true in $G$, then $\varphi$ is true in $H$.  Note that elementary subgroups are in particular existential subgroups.  Recall also that, unlike the case of amenable groups, subgroups of inner amenable groups need not be inner amenable.  For example, $G\times \bb Z$ is inner amenable for \emph{any} group $G$.

\begin{lem}\label{existentiallemma}
If $G$ is an inner amenable group and $H$ is a subgroup of $G$ that is existentially closed in $G$, then $H$ is inner amenable.
\end{lem}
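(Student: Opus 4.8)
The plan is to combine the combinatorial characterization of inner amenability via $c$-F\o lner sets (the unlabeled Fact quoted just above) with the hypothesis of existential closedness. The crucial point is a first-order translation: for a \emph{fixed} finite $S\subseteq G$, a fixed $\epsilon>0$, and a fixed positive integer $M$, the assertion ``there is an $(S,\epsilon)$-c-F\o lner set in $G$ of size exactly $M$'' can be expressed by an existential sentence $\varphi_{S,\epsilon,M}$ with parameters the elements of $S$. Indeed, one writes $\exists t_1\cdots\exists t_M$, conjoins the distinctness clauses $t_i\neq t_j$ for $i<j$ (so that $T:=\{t_1,\dots,t_M\}$ has exactly $M$ elements), and, for each $s\in S$, expresses the inequality $|sTs^{-1}\triangle T|<\epsilon M$ by a quantifier-free formula in $t_1,\dots,t_M$ and $s$. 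For the last step it is convenient to record that $|sTs^{-1}\triangle T|=2\,|T\setminus sTs^{-1}|$, since conjugation by $s$ is a bijection; thus the condition becomes ``the number of indices $i$ with $s^{-1}t_is\notin T$ is less than $\epsilon M/2$'', which --- being a statement that at least a prescribed number of the $i$ satisfy the quantifier-free formula $\bigvee_{j}s^{-1}t_is=t_j$ --- is a finite Boolean combination of equations in the $t_i$ and the elements of $S$.

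With this in hand the argument is short. Fix a finite $S\subseteq H$ and $\epsilon>0$; I must show $H$ has arbitrarily large $(S,\epsilon)$-c-F\o lner sets. Since $S$ is also a finite subset of $G$ and $G$ is inner amenable, the Fact provides, for every $N$, an $(S,\epsilon)$-c-F\o lner set in $G$ of some size $M\ge N$. For this $M$, the sentence $\varphi_{S,\epsilon,M}$ holds in $G$; its parameters lie in $H$, so existential closedness of $H$ in $G$ forces $\varphi_{S,\epsilon,M}$ to hold in $H$ as well, i.e., $H$ contains an $(S,\epsilon)$-c-F\o lner set of size $M\ge N$. As $N$ was arbitrary, $H$ has arbitrarily large $(S,\epsilon)$-c-F\o lner sets, and since $S$ and $\epsilon$ were arbitrary, the Fact (now applied to $H$) shows $H$ is inner amenable.

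The main thing to get right --- the only real obstacle --- is that one must \emph{not} try to fix a single size $M$ in advance: inner amenability of $G$ supplies $c$-F\o lner sets that are arbitrarily large but with no a priori bound on how large, and one genuinely cannot shrink a large $c$-F\o lner set down to a prescribed smaller size. The resolution is to transfer, \emph{separately for each size $M$ that happens to occur in $G$}, the corresponding existential sentence, using only that the set of such $M$ is unbounded. Finally, it is worth being mildly careful that existential closedness --- and not merely being a subgroup --- is what makes this go through, in keeping with the remark above that subgroups of inner amenable groups need not themselves be inner amenable.
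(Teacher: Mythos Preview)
Your argument is correct and follows the same approach as the paper's proof: fix $S\subseteq H$, $\epsilon>0$, and a lower bound $n$; use inner amenability of $G$ to find a $(S,\epsilon)$-c-F\o lner set $T$ of some size $M\ge n$; observe that the existence of such a set of that exact size is an existential sentence with parameters in $H$; and transfer it down. Your write-up simply spells out in more detail how the existential sentence is built (and correctly handles the point that the size $M$ is determined by what $G$ provides rather than prescribed in advance), but the underlying idea is identical.
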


\begin{proof}
Fix a finite set $S\subseteq H$, $\epsilon>0$, and $n\geq 1$.  Since $G$ is inner amenable, there is a finite set $T\subseteq G$ that is a $(S,\epsilon)$-c-F\o lner set with $|T|\geq n$.  The existence of this $(S,\epsilon)$-c-F\o lner set fact can be expressed by an existential sentence with parameters from $H$ and is thus true in $H$ since $H$ is existentially closed in $G$.
\end{proof}

\begin{proof}[Proof of Proposition \ref{uniformlyamenableprop}]
(1) $\Rightarrow (2)$:  Suppose that $G$ is uniformly inner amenable as witnessed by a function $f$.  Then for any given $n\geq 1$, the fact that any finite subset of $G$ of size at most $n$ has a c-F\o lner set with error $\frac{1}{n}$ of size in between $n$ and $f(n)$ can be expressed by a first-order sentence true of $G$ and thus of any group elementarily equivalent to $G$.  

(2) $\Rightarrow (1)$:  Suppose that $G$ is not uniformly inner amenable.  Without loss of generality, we may suppose that $G$ is inner amenable.  Then there is some $n\geq 1$ such that, for all $m\geq 1$, there exist finite subsets $S_m\subseteq G$ with $|S_m|\leq n$ such that all $(S_m,\frac{1}{n})$-c-F\o lner sets in $G$ of size at least $n$ have cardinality at least $m$.  By the compactness theorem, there is an elementary extension $H$ of $G$ which contains a subset $S\subseteq H$ with $|S|\leq n$ for which there is no $(S_m,\frac{1}{n})$-c-F\o lner set in $H$ of cardinality at least $n$.  Then $H$ is not inner amenable.

(2) $\Rightarrow$ (3) follows from the fact that ultrapowers of $G$ are elementarily equivalent to $G$. 

(3) implies (2):  Suppose that $H$ is elementarily equivalent to $G$ and embed $H$ elementarily in an ultrapower $G^\mathcal{U}$ of $G$.  By (3), $G^\mathcal{U}$ is inner amenable whence so is $H$ by Lemma \ref{existentiallemma}.
\end{proof}

Theorem \ref{main} can thus be stated:

\begin{cor}
$\sl_2(K)$ is an amenable group that is not uniformly inner amenable.
\end{cor}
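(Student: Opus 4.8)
The plan is to observe that this corollary is simply Theorem \ref{main} repackaged through the equivalence of Proposition \ref{uniformlyamenableprop}. First I would recall why $G:=\sl_2(K)$ is amenable: since $K=\bb F_2^{alg}$ is algebraic over a finite field, every finitely generated subgroup of $\sl_2(K)$ is contained in $\sl_2(F)$ for some finite subfield $F\subseteq K$ and is therefore finite, so $\sl_2(K)$ is locally finite and hence amenable (as already noted at the start of Section 2).

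Next I would assemble the failure of clause (2) of Proposition \ref{uniformlyamenableprop}. By Lemma \ref{eelemma}, the group $H:=\sl_2(L)$ is elementarily equivalent to $\sl_2(K)$; and by the discussion of Section 2, $H$ is not inner amenable, since $\sl_2(L)$ is a non-amenable CT group (Fact \ref{slipperyfact}), so Corollary \ref{innerCT} applies. Thus there is a group elementarily equivalent to $G$ that is not inner amenable, which is precisely the negation of Proposition \ref{uniformlyamenableprop}(2).

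Finally, I would invoke the equivalence (1) $\Leftrightarrow$ (2) of Proposition \ref{uniformlyamenableprop}: since (2) fails for $G=\sl_2(K)$, so does (1), i.e.\ $\sl_2(K)$ is not uniformly inner amenable. Combined with the amenability of $\sl_2(K)$ established in the first step, this is exactly the assertion of the corollary.

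There is no real obstacle here, as all the substantive work was done earlier; the only point one should verify is the (trivial) matching of terminology—that ``elementarily equivalent'' as used in Theorem \ref{main} and Lemma \ref{eelemma} is the same notion appearing in clause (2) of Proposition \ref{uniformlyamenableprop}, which it is. One could equally well route the argument through clause (3), using that $G$ is elementarily equivalent to the non-inner-amenable group $H$ and hence some ultrapower of $G$ contains an elementary copy of $H$ and so fails to be inner amenable; but the direct appeal to clause (2) is the cleanest.
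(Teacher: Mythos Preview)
Your proposal is correct and matches the paper's approach exactly: the paper presents this corollary as nothing more than a restatement of Theorem \ref{main} via Proposition \ref{uniformlyamenableprop}, and you have spelled out precisely that routing (amenability of $\sl_2(K)$ from local finiteness, failure of clause (2) via Lemma \ref{eelemma} and the non-inner-amenability of $\sl_2(L)$, hence failure of clause (1)). There is nothing to add.
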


In an earlier version of this paper, we asked if there is an inner amenable, \emph{nonamenable} group that is not uniformly inner amenable.  In response to this question, Jesse Peterson informed us of the following fact:

\begin{fact}
If $G$ and $H$ are groups, then $G\times H$ is inner amenable if and only if at least one of $G$ or $H$ is inner amenable. 
\end{fact}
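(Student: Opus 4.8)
The plan is to prove both directions by working directly with c-Følner sets, using the characterization of inner amenability recorded earlier (the existence of arbitrarily large $(S,\epsilon)$-c-Følner sets). The easy direction is: if one of the factors, say $G$, is inner amenable, then $G\times H$ is inner amenable. Indeed, given a finite $S\subseteq G\times H$ and $\epsilon>0$, let $S_G$ be the projection of $S$ to $G$. Choose an arbitrarily large $(S_G,\epsilon)$-c-Følner set $T_G\subseteq G$, and set $T:=T_G\times\{e_H\}$. For $(g,h)\in S$ we have $(g,h)T(g,h)^{-1}=(gT_Gg^{-1})\times\{e_H\}$, so $|(g,h)T(g,h)^{-1}\triangle T|=|gT_Gg^{-1}\triangle T_G|<\epsilon|T_G|=\epsilon|T|$; and $|T|=|T_G|$ can be taken arbitrarily large. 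Hence $G\times H$ is inner amenable.

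For the converse, suppose $G\times H$ is inner amenable, and suppose for contradiction that neither $G$ nor $H$ is inner amenable. First I would rule out the degenerate case: if, say, $G$ is finite, then any element of the center of $G$ together with a transversal argument shows $G\times H$ inner amenable forces $H$ inner amenable (or one just notes a finite group is automatically inner amenable once one is careful about the role of nontrivial central elements — in fact a finite group $G$ is inner amenable, since any finite set is a c-Følner set for the conjugation action). So the genuinely interesting case is when both $G$ and $H$ are infinite and non-inner-amenable. The strategy is to take a sequence of c-Følner sets $T_n\subseteq G\times H$ witnessing inner amenability of $G\times H$ — say $(S_n,1/n)$-c-Følner with $|T_n|\to\infty$, where $S_n$ exhausts $G\times H$ — and project: let $A_n=\pr_G(T_n)$ and $B_n=\pr_H(T_n)$. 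A conjugation-averaging / marginal argument on the probability measure $\mu_n$ uniformly distributed on $T_n$ should produce approximately-conjugation-invariant means on $G\setminus\{e\}$ or $H\setminus\{e\}$ unless both marginals stay bounded, and boundedness of both marginals caps $|T_n|$, contradicting $|T_n|\to\infty$. The point is that $T_n\subseteq A_n\times B_n$, so if $|A_n|\le C$ and $|B_n|\le C$ then $|T_n|\le C^2$. Thus one of the marginals, say $|A_n|\to\infty$ along a subsequence; then pushing $\mu_n$ forward to $G$ and checking that the conjugation-error on $G$ is controlled by the conjugation-error on $G\times H$ (projection can only decrease symmetric differences of the pushed measures, up to the usual triangle-inequality bookkeeping with multiplicities) yields that $G$ is inner amenable — but we must be slightly careful because the pushforward of the uniform measure on $T_n$ is not uniform on $A_n$, so "arbitrarily large c-Følner set" must be replaced by "approximately invariant mean of small support-mass concentration," i.e. one should really argue with means rather than sets here.

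The main obstacle I expect is exactly this last subtlety: passing from a c-Følner set in the product to a c-Følner set (not merely an approximately invariant mean) in a factor, while simultaneously controlling the relevant quantity — namely ruling out that almost all the mass of $\mu_n$ sits over the identity of $G$, which would make the $G$-marginal useless. The clean way around this is to phase the whole argument in terms of finitely additive invariant means: inner amenability of $\Gamma$ is equivalent to the existence of a finitely additive conjugation-invariant mean on $\Gamma\setminus\{e\}$, a mean on $(G\times H)\setminus\{e\}$ restricts/pushes to a conjugation-invariant mean on $G$ and on $H$, and the hypothesis that $(G\times H)\setminus\{e_G\}\times\{e_H\}$ supports such a mean — combined with the fact that the mean must assign total mass to $(\{e_G\}\times H)\cup(G\times\{e_H\})\cup(G\setminus\{e_G\})\times(H\setminus\{e_H\})$ — forces, via a case analysis on which of these three pieces carries positive mass, that at least one of $G$, $H$ admits a conjugation-invariant mean on its nonidentity elements. (Here one uses that $G\times\{e_H\}$ is central-normalized appropriately and that a conjugation-invariant mean on $G\setminus\{e_G\}$ is precisely inner amenability of $G$; the cross-term $(G\setminus\{e_G\})\times(H\setminus\{e_H\})$ projects to give a conjugation-invariant mean on $G\setminus\{e_G\}$.) I would present this mean-theoretic version as the clean proof, relegating the Følner-set heuristic to motivation.
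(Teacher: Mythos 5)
Your proposal is correct, but it takes a different route from the paper on the substantive direction: the paper only writes out the easy implication (if $\mu$ witnesses inner amenability of $G$, then $\mu\times\delta_{e_H}$ witnesses it for $G\times H$ --- essentially the same content as your $T_G\times\{e_H\}$ Følner sets) and simply cites \cite[Proposition 2.4]{graphinner} for the converse, whereas you give a self-contained argument. Your ``clean'' mean-theoretic version is the one that actually works, and it does work under this paper's definition of inner amenability (a finitely additive conjugation-invariant probability measure on $\Gamma\setminus\{e\}$, atoms allowed): the three pieces $(G\setminus\{e_G\})\times\{e_H\}$, $\{e_G\}\times(H\setminus\{e_H\})$ and $(G\setminus\{e_G\})\times(H\setminus\{e_H\})$ are each conjugation-invariant, one of them carries positive mass, and the normalized restriction pushed forward along the coordinate projection (which intertwines conjugation by $(g,h)$ with conjugation by $g$) gives an invariant mean on $G\setminus\{e_G\}$ or $H\setminus\{e_H\}$; equivalently, one can just note that $E\mapsto m(E\times H)$ and $F\mapsto m(G\times F)$ have total masses summing to at least $1$, and normalize whichever is positive. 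Two caveats: the Følner-set sketch in your opening paragraphs has exactly the defects you yourself flag (pushforwards of uniform measures are not uniform, mass can sit over the identity of a factor, and for a finite factor ``arbitrarily large'' c-Følner sets do not exist), so it should indeed be demoted to motivation; and be aware that parts of the literature (plausibly including the cited reference) use a stronger ``atomless mean'' convention for inner amenability, under which the pushforward can acquire atoms and your quick case analysis no longer suffices as stated --- which is likely why the paper defers to a reference rather than recording this short argument.
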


The previous fact appears to be folklore but we were unable to find it explicitly stated in the literature.  One direction is easy:  if $G$ is inner amenable and $\mu$ is a finitely additive measure on $G\setminus \{e_G\}$ witnessing that $G$ is inner amenable, then $\mu\times \delta_{e_H}$ is a measure on $(G\times H)\setminus \{(e_G,e_H)\}$ that witnesses that $G\times H$ is inner amenable.  For a proof of the other direction, see \cite[Proposition 2.4]{graphinner}.

As a result of this fact, we see that, for any non-inner amenable group $G$, we have that $G\times \sl_2(K)$ is inner amenable, nonamenable, and not uniformly inner amenable.  To see that $G\times \sl_2(K)$ is not uniformly inner amenable, note that $(G\times \sl_2(K))^{\cal U}\cong G^{\cal U} \times \sl_2(K)^{\cal U}$, which is not inner amenable as neither $G^{\cal U}$ nor $\sl_2(K)^{\cal U}$ are inner amenable ($G^{\cal U}$ is not inner amenable by Lemma \ref{existentiallemma}).


\end{document}